\newdimen\plusheight
\def\+{\;\lower\plusheight\hbox{$+$}\;}
\newdimen\minusheight
\def\-{\;\lower\minusheight\hbox{$-$}\;}
\newdimen\cdotsheight
\def\cds{\lower\cdotsheight\hbox{$\cdots$}}
\renewcommand{\(}{\left\(}
\renewcommand{\)}{\right\)}
\renewcommand{\[}{\left[}
\numberwithin{equation}{section}
 \theoremstyle{plain}
\newtheorem{theorem}{Theorem}[section]
\newtheorem{lemma}[theorem]{Lemma}
\newtheorem{corollary}[theorem]{Corollary}
\newtheorem{remark}[theorem]{Remark}
\newenvironment{pf1}
   {\vskip 0.15in \par\noindent{\it Proof of Theorem \ref{th-a10n9}.}\hskip 0.5em\ignorespaces}
   \newenvironment{pf-a1-10}
   {\vskip 0.15in \par\noindent{\it Proof of \eqref{1,10mod5}.}\hskip 0.5em\ignorespaces}
    \newenvironment{pf-a3-10}
   {\vskip 0.15in \par\noindent{\it Proof of \eqref{3,10mod5}.}\hskip 0.5em\ignorespaces}
\begin{document}
\title[Proofs of Some Conjectures of Chan on Appell-Lerch Sums] {Proofs of Some Conjectures of Chan on Appell-Lerch Sums}

\author{Nayandeep Deka Baruah}
\address{Department of Mathematical Sciences, Tezpur University, Sonitpur, Assam, India, Pin-784028}
\email{nayan@tezu.ernet.in}

\author{Nilufar Mana Begum}
\address{Department of Mathematical Sciences, Tezpur University, Sonitpur, Assam, India, Pin-784028}
\email{nilufar@tezu.ernet.in}


\begin{center}
{\textbf{Proofs of Some Conjectures of Chan on Appell-Lerch Sums}}\\[5mm]
{\footnotesize  Nayandeep Deka Baruah and Nilufar Mana Begum}\\[3mm]
\end{center}

\vskip 5mm \noindent{\footnotesize{\bf Abstract.}
On page 3 of his lost notebook, Ramanujan defines the Appell-Lerch sum
$$\phi(q):=\sum_{n=0}^\infty \dfrac{(-q;q)_{2n}q^{n+1}}{(q;q^2)_{n+1}^2},$$ which is connected to some of his sixth order mock theta functions. Let $\sum_{n=1}^\infty a(n)q^n:=\phi(q)$. In this paper, we find a representation of the generating function of $a(10n+9)$ in terms of $q$-products. As corollaries, we deduce the congruences $a(50n+19)\equiv a(50n+39)\equiv a(50n+49)\equiv0~(\textup{mod}~25)$ as well as $a(1250n+250r+219)\equiv 0~(\textup{mod}~125)$, where $r=1$, $3$, and $4$. The first three congruences were  conjectured by Chan in 2012,  whereas the congruences modulo 125 are new. We also prove two more conjectural congruences of Chan for the coefficients of two Appell-Lerch sums.
\vskip 3mm
\noindent{\footnotesize Key Words:} Appell-Lerch sum;  Theta function; Mock theta function; Congruence

\vskip 3mm
\noindent {\footnotesize 2010 Mathematical Reviews Classification
Numbers: Primary 11P83; Secondary 33D15}.}

\section{\textbf{Introduction}}
Throughout the paper, we use the customary $q$-series notation:
\begin{align*}(a;q)_0&:=1,\\
(a;q)_n&:=\prod_{k=0}^{n-1}(1-aq^k), \quad n\ge 1,\\
(a;q)_\infty&:=\lim_{n\to \infty}(a;q)_n, \quad |q|<1,\\\intertext{and}
(a_1,a_2,\ldots,a_k;q)_\infty&:=(a_1;q)_\infty(a_2;q)_\infty\cdots(a_k;q)_\infty.\end{align*} For any positive integer $j$, for brevity, we also use $E_j:=(q^j;q^j)_\infty$.

Let $x, z\in\mathbb{C}^*$ with neither $z$ nor $xz$ an integral power of $q$. Following the definition given by Hickerson and Mortenson in \cite[Definition 1.1]{hickerson}, an Appell-Lerch sum $m(x,q,z)$ is a series of the form
$$m(x,q,z):=\dfrac{1}{(q,q/z,q;q)_\infty}\sum_{r=-\infty}^\infty\dfrac{(-1)^{n+1}q^{n(n+1)/2}z^{n+1}}{1-xzq^n}.$$ These sums were first studied in the nineteenth century by Appell \cite{appell1, appell2, appell3} and then by Lerch \cite{lerch}. But, in recent years, there has been considerable work on these sums and their connections to mock theta functions. We refer the readers to \cite{andrews-hick, chan-actarith, hickerson,  hickerson2,  mortenson1, mortenson2, waldherr, zwegers}.

In his lost notebook \cite[pp. 2,~4,~13,~17]{ramalost}, Ramanujan recorded seven mock theta functions and eleven identities involving them. Andrews and Hickerson \cite{andrews-hick} proved these eleven identities and called the seven functions sixth order mock theta functions. Three of the sixth order mock theta functions are\begin{align*}\rho(q)&:=\sum_{n=0}^\infty \dfrac{(-q;q)_{n}q^{n(n+1)/2}}{(q;q^2)_{n+1}},\\
\mu(q)&:=\sum_{n=0}^\infty \dfrac{(-1)^n(q;q^2)_nq^{(n+1)^2}}{(-q;q)_{2n+1}}\\\intertext{and}
\lambda(q)&:=\sum_{n=0}^\infty \dfrac{(-1)^n(q;q^2)_nq^{n}}{(-q;q)_{n}}.\end{align*}
On page 3 of his lost notebook \cite{ramalost}, Ramanujan defines the function
$$\phi(q):=\sum_{n=0}^\infty \dfrac{(-q;q)_{2n}q^{n+1}}{(q;q^2)_{n+1}^2},$$ and then states that
$$\rho(q)=2q^{-1}\phi(q^3)+\dfrac{(q^2;q^2)_\infty^2(-q^3;q^3)_\infty}{(q;q^2)_\infty^2(q^3;q^3)_\infty}.$$
Choi \cite{choi} proved two analogous identities involving $\phi$ and the two functions $\mu$ and $\lambda$. The function $\phi(q)$ was also studied by Hikami \cite{hikami}.

Now, let $\sum_{n=1}^\infty a(n)q^n:=\phi(q)$. Chan \cite{chan-actarith} proved several congruences for the coefficients $a(n)$ of the function $\phi$ modulo 2, 3, 4, 5, 7, and 27. In particular, Chan \cite{chan-actarith} proved the congruence
\begin{align}\label{a10n9}
a(10n+9)&\equiv0~(\textup{mod}~5)
\end{align}
and conjectured (\cite[Conjecture 7.1]{chan-actarith}) that, for any nonnegative integer $n$,
\begin{align}\label{a50n}
a(50n+19)&\equiv a(50n+39)\equiv a(50n+49)\equiv0~(\textup{mod}~25).
\end{align}

In this paper, we find a representation of the generating function of $a(10n+9)$ so that \eqref{a10n9} follows trivially. We then prove \eqref{a50n} from that representation of the generating function of $a(10n+9)$. Furthermore, we find the following new congruences:

\noindent For any nonnegative integer $n$, we have
\begin{align}\label{a1250n}
a(1250n+250r+219)&\equiv 0~(\textup{mod}~125), \quad \textup{for}~ r=1,3,4.
\end{align}

In \cite{chan-actarith}, Chan studied some other functions similar to $\phi$ and found congruences for them. In particular, he considered, for any integer $p\ge 2$ and $1\le j\le p-1$ with $p$ and $j$ coprime, the Appell-Lerch sum
$$\sum_{n=0}^\infty a_{j,p}(n)q^n=\dfrac{1}{(q^j,q^{p-j},q^p;q^p)_\infty}\sum_{n=-\infty}^\infty \dfrac{(-1)^nq^{pn(n+1)/2+jn+j}}{1-q^{pn+j}},$$ and proved that
$$\sum_{n=0}^\infty a_{j,p}(pn+(p-j)j)q^n=p~\dfrac{E_p^4}{E_1^3(q^j,q^{p-j};q^p)_\infty^2},$$ which readily implies the congruence
$$a_{j,p}(pn+(p-j)j)\equiv 0~(\textup{mod}~p).$$ It is to be noted that $2a(n)=a_{1,2}(n).$

In \cite{chan-mao},  Chan and Mao gave a generalization of $a_{j,p}$.

Chan \cite[Conjecture 7.1]{chan-actarith} also presented the following conjectural congruences:
\begin{align}
\label{1,6mod2}a_{1,6}(2n)&\equiv 0~(\textup{mod}~2),\\
\label{1,10mod2}a_{1,10}(2n)&\equiv a_{3,10}(2n)\equiv 0~(\textup{mod}~2),\\
\label{1,6mod3}a_{1,6}(6n+3)&\equiv 0~(\textup{mod}~3),\\
\label{1,3mod5}a_{1,3}(5n+3)&\equiv a_{1,3}(5n+4)\equiv 0~(\textup{mod}~5),\\
\label{1,10mod5}a_{1,10}(10n+5)&\equiv 0~(\textup{mod}~5),\\
\label{3,10mod5}a_{3,10}(10n+5)&\equiv 0~(\textup{mod}~5).
\end{align}
Recently, Qu, Wang, and Yao \cite{qu} proved \eqref{1,6mod2} and \eqref{1,10mod2} by finding the following general congruence:

If $j$ and $k$ are positive integers  with $1\le j\le k-1$ and $j$ odd, then for any nonnegative integer $n$,
$$a_{j,2k}(2n)\equiv 0~(\textup{mod}~2).$$
They also proved \eqref{1,6mod3} by finding the following identity:
$$\sum_{n=0}^\infty a_{1,6}(6n+3)q^n=3~\dfrac{E_2^3E_3^5}{E_1^6E_6},$$
which is analogous to Ramanujan's  \cite{rama-pn} so-called ``most" beautiful identity for the partition function $p(n)$, namely,
\begin{align}\label{rama-beautiful}
\sum_{n=0}^\infty p(5n+4)q^n=5~\dfrac{E_5^5}{E_1^6},
\end{align}
that immediately implies one of his three famous partition congruences, namely,
\begin{align*}
p(5n+4)&\equiv0~(\textup{mod}~5).
\end{align*}

Congruences in \eqref{1,3mod5}  were proved by Ding and Xia \cite{ding-xia}.

In this paper, we prove the remaining conjectural congruences  \eqref{1,10mod5} and  \eqref{3,10mod5} of Chan \cite{chan-actarith}.

We organize the paper in the following way. In Section \ref{sec3}, we find an exact generating function of $a(10n+9)$ analogous to \eqref{rama-beautiful} and deduce the congruences in \eqref{a50n} as well as  the new congruences in \eqref{a1250n}. In Section \ref {sec4}, we prove congruences \eqref{1,10mod5} and \eqref{3,10mod5}.

We employ Ramanujan's simple theta function identities and some other known identities for the Rogers-Ramanujan continued fraction, which is defined by
 \begin{equation*}
R(q):=\dfrac{q^{1/5}}{1}\+\dfrac{q}{1}\+\dfrac{q^2}{1}\+\dfrac{q^3}{1}\+\cds=q^{1/5}\dfrac{(q;q^5)_\infty(q^4;q^5)_\infty}{(q^2;q^5)_\infty(q^3;q^5)_\infty},~|q|<1.
\end{equation*}
In the next section, we present the background material on Ramanujan's theta functions and some lemmas that will be used in the subsequent sections.
\section{\textbf{Background material on Ramanujan's theta functions and some useful lemmas}}
Ramanujan's general theta function $f(a,b)$ is
defined by
\begin{equation*}
f(a,b):=\sum_{k=-\infty}^\infty a^{k(k+1)/2}b^{k(k-1)/2}, \quad |ab|<1.
\end{equation*}
We require the following special cases of $f(a,b)$:
\begin{align}\label{varphi}
\varphi(q)&:=f(q,q)=\sum_{j=-\infty}^\infty
q^{j^2}=(-q; q^2)^2_\infty(q^2; q^2)_\infty=\dfrac{E_2^5}
{E_1^2E_4^2},\\\intertext{and}
\label{psi}\psi(q)&:=f(q,q^3)=\sum_{j=0}^\infty q^{j(j+1)/2}=\dfrac{(q^2; q^2)_\infty}{(q; q^2)_\infty}=\dfrac{E_2^2}
{E_1},
\end{align}
where the product representations arise from Jacobi's famous triple product identity \cite[p. 35, Entry 19]{bcb3}
\begin{equation}\label{jtpi}
f(a,b)=(-a;ab)_\infty(-b;ab)_\infty(ab;ab)_\infty .
\end{equation}

Among many beautiful properties satisfied by $f(a,b)$ we recall the following from \cite{bcb3}.

\begin{lemma}\label{fabcd}\textup{(Berndt \cite[p. 45, Entry 29]{bcb3})}If $ab=cd$, then
\begin{align}\label{fabcdi}
f(a,b)f(c,d)&+f(-a,-b)f(-c,-d)=2f(ac,bd)f(ad, bc)\\\intertext{and}
\label{fabcdii}f(a,b)f(c,d)&-f(-a,-b)f(-c,-d)=2af\left(\dfrac{b}{c},ac^2d\right)f\left(\dfrac{b}{d}, acd^2\right).
\end{align}
\end{lemma}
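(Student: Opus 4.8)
The plan is to prove both identities \eqref{fabcdi} and \eqref{fabcdii} simultaneously by expanding each product of theta functions into an absolutely convergent double series and then splitting the resulting lattice sum according to the parity of the summation indices. Absolute convergence is not an issue: the hypothesis $ab=cd$ together with $|ab|<1$, $|cd|<1$ gives $|abcd|=|ab|^2<1$, which is exactly the convergence condition for each of the four theta functions appearing on the right-hand sides, so all rearrangements below are legitimate. Writing $f(a,b)=\sum_{k}a^{k(k+1)/2}b^{k(k-1)/2}$ and using $k(k+1)/2+k(k-1)/2=k^2\equiv k\pmod{2}$, I first record that $f(-a,-b)=\sum_{k}(-1)^{k}a^{k(k+1)/2}b^{k(k-1)/2}$. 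Multiplying the series out then gives
\begin{align*}
f(a,b)f(c,d)&\pm f(-a,-b)f(-c,-d)\\
&=\sum_{m,n=-\infty}^{\infty}\left(1\pm(-1)^{m+n}\right)a^{m(m+1)/2}b^{m(m-1)/2}c^{n(n+1)/2}d^{n(n-1)/2},
\end{align*}
so the choice $+$ retains precisely the terms with $m+n$ even, and the choice $-$ precisely those with $m+n$ odd, each carrying a factor $2$.

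For \eqref{fabcdi} I would parametrize the even sublattice by the bijection $(m,n)=(i+j,\,i-j)$, $(i,j)\in\mathbb{Z}^2$, onto $\{m+n\text{ even}\}$. Expanding the four exponents $m(m+1)/2$, $m(m-1)/2$, $n(n+1)/2$, $n(n-1)/2$ and comparing with the $(i,j)$-term of $f(ac,bd)f(ad,bc)$ shows that the two terms differ only by the factor $a^{ij}b^{ij}c^{-ij}d^{-ij}=(ab/cd)^{ij}$, which equals $1$ since $ab=cd$. Summing over $(i,j)$ yields $2f(ac,bd)f(ad,bc)$, as claimed.

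For \eqref{fabcdii} the same philosophy applies to the odd coset $\{m+n\text{ odd}\}$, but the correct bijection turns out to be the reflected shift $(m,n)=(1-i-j,\,j-i)$, $(i,j)\in\mathbb{Z}^2$: here $m+n=1-2i$ is always odd, and the map is inverted by $i=(1-m-n)/2$, $j=(1-m+n)/2$. Expanding the four exponents and comparing with the $(i,j)$-term of $a\,f(b/c,ac^2d)\,f(b/d,acd^2)$ (indexing $f(b/c,ac^2d)$ by $i$ and $f(b/d,acd^2)$ by $j$), I expect every discrepancy to collapse to a single factor $(ab/cd)^{\kappa}$ with $\kappa=ij-i-j$, trivial by $ab=cd$, while the extra power of $a$ demanded by the substitution is exactly the prefactor $a$ on the right-hand side. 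Summing over $(i,j)$ then produces $2a\,f(b/c,ac^2d)f(b/d,acd^2)$.

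The exponent bookkeeping is routine; the genuine obstacle is locating the correct parametrization of the odd coset. The naive coset representative $(m,n)=(i+j+1,\,i-j)$ also parametrizes $\{m+n\text{ odd}\}$, yet when matched term-by-term against the stated right-hand side of \eqref{fabcdii} it leaves a spurious factor $(ac)^{2i}(ad)^{2j}$ that is not killed by $ab=cd$; only the reflected choice above lets the cross terms assemble into a pure power of $ab/cd$. In practice I would pin down the substitution systematically by demanding that the vector of exponent differences (LHS minus target) be proportional to $(1,1,-1,-1)$, the unique combination annihilated by the relation $ab=cd$. This forces $m$ through the identity $(\text{LHS }a)-(\text{LHS }b)=(\text{target }a)-(\text{target }b)$ and $n$ through the analogous relation for $c$ and $d$, and solving these two linear conditions returns exactly $m=1-i-j$ and $n=j-i$.
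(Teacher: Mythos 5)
The paper never proves this lemma: it is quoted directly from Berndt \cite[p.~45, Entry 29]{bcb3}, so there is no in-paper argument to compare against; your proposal is a genuine from-scratch proof, and it is correct. I verified the bookkeeping. The parity split is right, since $f(-a,-b)=\sum_k(-1)^k a^{k(k+1)/2}b^{k(k-1)/2}$ follows from $k^2\equiv k\pmod 2$. For \eqref{fabcdi}, with $(m,n)=(i+j,\,i-j)$ the four exponent differences against the $(i,j)$-term of $f(ac,bd)f(ad,bc)$ come out to $(ij,\,ij,\,-ij,\,-ij)$ for $(a,b,c,d)$, i.e.\ a factor $(ab/cd)^{ij}=1$. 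For \eqref{fabcdii}, with $(m,n)=(1-i-j,\,j-i)$ the differences against $a$ times the $(i,j)$-term of $f(b/c,ac^2d)\,f(b/d,acd^2)$ are $(\kappa,\kappa,-\kappa,-\kappa)$ with $\kappa=ij-i-j$, again killed by $ab=cd$, exactly as you predicted. Your convergence remark is also sound: each theta function on the right-hand sides has base $|abcd|=|ab|^2<1$, and the doubly indexed series converge absolutely, so all rearrangements are legitimate. Your warning about the naive odd-coset representative $(m,n)=(i+j+1,\,i-j)$ is accurate as well --- it leaves a residual $(ac)^{2i}(ad)^{2j}$ that $ab=cd$ cannot remove --- and identifying the reflected shift is indeed the only non-routine step; your systematic recipe (forcing the exponent-difference vector to be proportional to $(1,1,-1,-1)$) is a clean way to find it. What you have written is essentially the classical parity-dissection proof of Ramanujan's Entry 29, i.e.\ the content that the paper's citation points to, so nothing further is needed.
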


In the next lemma, we state Jacobi's identity and an analogous identity.

\begin{lemma}\textup{(Berndt \cite[Theorem 1.3.9 and Corollary 1.3.22]{spirit})} We have
\begin{align}\label{jacobi}E_1^3=\sum_{k=0}^\infty(-1)^k(2k+1)q^{k(k+1)/2}\\\intertext{and}
\label{jacobi-type}\dfrac{E_1^2E_4^2}{E_2}=\sum_{n=-\infty}^\infty (3n+1)q^{3n^2+2n}.
\end{align}
\end{lemma}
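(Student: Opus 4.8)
The plan is to handle the two identities by separate classical devices: \eqref{jacobi} by specializing the Jacobi triple product \eqref{jtpi}, and \eqref{jacobi-type} by a parallel ``differentiate at a zero'' argument applied to the quintuple product identity. For \eqref{jacobi}, I would first put \eqref{jtpi} into two-variable form by taking $a=-z$ and $b=-q/z$ (so $ab=q$), which gives
\begin{equation*}
(q;q)_\infty(z;q)_\infty(q/z;q)_\infty=\sum_{k=-\infty}^\infty(-1)^kz^kq^{k(k-1)/2}.
\end{equation*}
Since $(z;q)_\infty=(1-z)(zq;q)_\infty$, dividing by $1-z$ and letting $z\to1$ takes the left side to $E_1^3$, while the right side, which vanishes at $z=1$, becomes after one application of L'H\^opital the limit $-\sum_k(-1)^k k\,q^{k(k-1)/2}$. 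The shift $k\mapsto k+1$ rewrites this as $\sum_k(-1)^k(k+1)q^{k(k+1)/2}$, and the folding $k\mapsto-k-1$, which fixes $k(k+1)/2$, collapses the bilateral sum to $\sum_{k\ge0}(-1)^k(2k+1)q^{k(k+1)/2}$, proving \eqref{jacobi}.

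For \eqref{jacobi-type}, I would start from the quintuple product identity, which with nome $q^2$ reads
\begin{equation*}
\sum_{n=-\infty}^\infty q^{3n^2+n}\left(z^{3n}-z^{-3n-1}\right)=\prod_{m=1}^\infty(1-q^{2m})(1-zq^{2m})(1-z^{-1}q^{2m-2})(1-z^2q^{4m-2})(1-z^{-2}q^{4m-2}).
\end{equation*}
Both sides vanish at $z=q$; on the product side this comes solely from the single factor $1-z^{-2}q^2$ (the $m=1$ term of the last family). Differentiating in $z$ and setting $z=q$ turns the left side, after the substitutions $n\mapsto-n-1$ and $n\mapsto-n$, into $-2q^{-2}\sum_n(3n+1)q^{3n^2+2n}$. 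The right side becomes the derivative $2q^{-1}$ of $1-z^{-2}q^2$ at $z=q$ times the product of all surviving factors evaluated there; assembling those factors and using $(q;q^2)_\infty=E_1/E_2$ collapses them to $-2q^{-2}E_1^2E_4^2/E_2$. Equating the two derivatives yields \eqref{jacobi-type}.

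The routine parts are the two product specializations and the $0/0$ evaluation in \eqref{jacobi}. The hard part will be the bookkeeping in \eqref{jacobi-type}: one must check that $z=q$ is a simple zero arising from exactly one factor, so that the product rule leaves a genuine product rather than a Lambert series, and then carry the reindexings on the series side together with the cancellations $\frac{1-q^{-1}}{1-q}=-q^{-1}$ and $(q;q^2)_\infty^2=E_1^2/E_2^2$ on the product side through to the same multiple of the target. As an alternative that avoids the quintuple product, \eqref{jacobi-type} also drops out of the general differentiation formulas for Ramanujan's theta functions used in the cited source.
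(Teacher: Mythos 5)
This lemma is nowhere proved in the paper: it is quoted as known from Berndt \cite{spirit}, so there is no internal argument to compare against, and your derivation is necessarily a different, self-contained route. It is correct. Your proof of \eqref{jacobi} --- specializing \eqref{jtpi} to $a=-z$, $b=-q/z$, dividing by $1-z$, applying L'H\^opital at $z=1$, then shifting $k\mapsto k+1$ and folding $k\mapsto -k-1$ --- is the classical differentiation proof, essentially the one given in the cited source. For \eqref{jacobi-type} I checked the bookkeeping you flagged as the crux, and it goes through: in your nome-$q^2$ normalization of the quintuple product, the only factor vanishing at $z=q$ is indeed $1-z^{-2}q^2$, since the other candidates evaluate to $1-q^{2m+1}$, $1-q^{2m-3}$, $1-q^{4m}$, and (for $m\ge 2$) $1-q^{4m-4}$, none of which vanish; differentiating the series and setting $z=q$ gives $\sum_n 3n\,q^{3n^2+4n-1}+\sum_n(3n+1)q^{3n^2-2n-2}$, which the substitutions $n\mapsto -n-1$ and $n\mapsto -n$ combine into $-2q^{-2}\sum_n(3n+1)q^{3n^2+2n}$; and on the product side the identity $\prod_{m\ge1}(1-q^{2m+1})\prod_{m\ge1}(1-q^{2m-3})=-q^{-1}(q;q^2)_\infty^2$ gives $2q^{-1}\cdot\bigl(-q^{-1}E_1^2E_4^2/E_2\bigr)$, so the common factor $-2q^{-2}$ cancels to yield \eqref{jacobi-type}. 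Two caveats, neither fatal: first, your argument consumes the quintuple product identity, a strictly heavier input than anything recorded in the paper (only the triple product \eqref{jtpi} appears there), so you trade the paper's citation for another one --- albeit of a more fundamental result; second, the termwise differentiation of the series and the product-rule step for the infinite product deserve one justifying sentence (locally uniform convergence for $|q|<1$, or equivalently writing the product as $(1-z^{-2}q^2)Q(z)$ with $Q$ analytic and nonvanishing at $z=q$ and comparing expansions), which is routine.
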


In the following well-known results, the first two are 5-dissections of $E_1$ and $1/E_1$, respectively.
\begin{lemma}\textup{(Berndt \cite[p. 165]{spirit})} If ~ $T(q):=\dfrac{q^{1/5}}{R(q)}=\dfrac{(q^2;q^5)_\infty(q^3;q^5)_\infty}{(q;q^5)_\infty(q^4;q^5)_\infty}$, then
\begin{align}
\label{E1}E_1&=E_{25}\left(T(q^5)-q-\dfrac{q^{2}}{T(q^5)}\right)\\
\label{1byE1}\dfrac{1}{E_1}&= \dfrac{E_{25}^5}{E_5^6}\Big(T(q^5)^4+qT(q^5)^3+2q^2T(q^5)^2+3q^3T(q^5)+5q^4-\dfrac{3q^5}{T(q^5)}
\notag\\
&\quad+\dfrac{2q^6}{T(q^5)^2}-\dfrac{q^7}{T(q^5)^3}+\dfrac{q^8}{T(q^5)^4}\Big),\\\intertext{and}
\label{E1-6}11q&+\dfrac{E_1^6}{E_5^6}= T(q)^5-\dfrac{q^2}{T(q)^5}.
\end{align}
\end{lemma}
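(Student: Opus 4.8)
The three identities all descend from Ramanujan's two celebrated modular relations for $R(q)$, which in the form convenient here read
\begin{equation*}
\dfrac{1}{R(q^5)}-1-R(q^5)=\dfrac{E_1}{qE_{25}}\qquad\text{and}\qquad\dfrac{1}{R(q)^5}-11-R(q)^5=\dfrac{E_1^6}{qE_5^6}.
\end{equation*}
Both can themselves be derived from the Jacobi triple product \eqref{jtpi} together with the addition formulas \eqref{fabcdi} and \eqref{fabcdii} of Lemma \ref{fabcd}, but for the present lemma the plan is to take them as the known starting point and simply transfer from $R$ to $T$, after which \eqref{1byE1} comes from a rationalization.

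Since $T(q)=q^{1/5}/R(q)$, we have $T(q^5)=q/R(q^5)$ and $T(q)^5=q/R(q)^5$, so that $1/R(q^5)=T(q^5)/q$ and $1/R(q)^5=T(q)^5/q$ (and likewise for $R(q^5)$ and $R(q)^5$). Substituting into the first modular relation and clearing the factor $q$ yields \eqref{E1}, while substituting into the second and clearing $q$ yields \eqref{E1-6}; both reduce to one line of algebra once the substitution is carried out.

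For \eqref{1byE1} I would begin from \eqref{E1}. Writing $t:=T(q^5)$, identity \eqref{E1} reads $E_1=E_{25}(t-q-q^2/t)=E_{25}(t^2-qt-q^2)/t$, whence $1/E_1=t/\bigl[E_{25}(t^2-qt-q^2)\bigr]$. The key step is to rationalize the denominator: the quadratic $t^2-qt-q^2$ divides $t^{10}-11q^5t^5-q^{10}$, with
\begin{equation*}
(t^2-qt-q^2)\,Q(t)=t^{10}-11q^5t^5-q^{10},\quad Q(t)=t^8+qt^7+2q^2t^6+3q^3t^5+5q^4t^4-3q^5t^3+2q^6t^2-q^7t+q^8.
\end{equation*}
Multiplying numerator and denominator of $1/E_1$ by $Q(t)$ and invoking \eqref{E1-6} with $q$ replaced by $q^5$, namely $E_5^6/E_{25}^6=t^5-11q^5-q^{10}/t^5$ and hence $t^{10}-11q^5t^5-q^{10}=t^5E_5^6/E_{25}^6$, then gives
\begin{equation*}
\dfrac{1}{E_1}=\dfrac{t\,Q(t)}{E_{25}\,(t^{10}-11q^5t^5-q^{10})}=\dfrac{t\,Q(t)}{E_{25}\cdot t^5E_5^6/E_{25}^6}=\dfrac{E_{25}^5}{E_5^6}\cdot\dfrac{Q(t)}{t^4},
\end{equation*}
and $Q(t)/t^4$ is exactly the Laurent polynomial in $T(q^5)$ appearing on the right of \eqref{1byE1}.

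The only step that requires genuine checking is the displayed factorization, which I would verify by polynomial long division of $t^{10}-11q^5t^5-q^{10}$ by $t^2-qt-q^2$; the quotient's coefficients $1,1,2,3,5,-3,2,-1,1$ must emerge precisely as stated in order to match \eqref{1byE1}. Conceptually the divisibility is forced, since the two roots $t=q(1\pm\sqrt{5})/2$ of $t^2-qt-q^2$ satisfy $t^{10}-11q^5t^5-q^{10}=0$, so the factorization is automatic and the only real labor is the bookkeeping in the division, which is routine. This will be the main (though elementary) obstacle; everything else is substitution.
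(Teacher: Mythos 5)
Your proposal is correct, and I have verified the key factorization: expanding $(t^2-qt-q^2)(t^8+qt^7+2q^2t^6+3q^3t^5+5q^4t^4-3q^5t^3+2q^6t^2-q^7t+q^8)$ does give $t^{10}-11q^5t^5-q^{10}$, all intermediate coefficients cancelling; the substitutions $T(q^5)=q/R(q^5)$ and $T(q)^5=q/R(q)^5$ and the self-consistent use of \eqref{E1-6} with $q$ replaced by $q^5$ are all sound and non-circular. Note, however, that the paper itself gives no proof of this lemma at all: it is quoted verbatim as a known result from Berndt \cite[p.~165]{spirit}, so there is no internal argument to compare against. Your derivation --- taking Ramanujan's two modular relations for $R(q)$ as the starting point, transferring to $T$, and obtaining \eqref{1byE1} by rationalizing $E_{25}(t^2-qt-q^2)/t$ against $t^{10}-11q^5t^5-q^{10}$ --- is essentially the standard treatment in the cited source, so in effect you have supplied the proof the paper outsources.
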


In the following two lemmas, we recall some useful results from our paper \cite{baruah-begum}.
\begin{lemma}\textup{(Baruah and Begum \cite[Lemma 1.3]{baruah-begum})}\label{xy} If $x=T(q)$ and $y=T(q^2)$, then
\begin{align}
\label{xy2}xy^2-\dfrac{q^2}{xy^2}&=K,\\
\label{x2-by-y}\dfrac{x^2}{y}-\dfrac{y}{x^2}&=\dfrac{4q}{K},\\
\label{y3-by-x}\dfrac{y^3}{x}+q^2\dfrac{x}{y^3}&=K+\dfrac{4q^2}{K}-2q,\\
\label{x3y}x^3y+\dfrac{q^2}{x^3y}&=K+\dfrac{4q^2}{K}+2q,
\end{align}
where $K=(E_2E_5^5)/(E_1E_{10}^5).$
\end{lemma}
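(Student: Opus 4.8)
The plan is to reduce the four identities to statements about the Rogers--Ramanujan continued fraction and then to separate the genuinely analytic content from the purely algebraic bookkeeping. Writing $x=T(q)=q^{1/5}/R(q)$ and $y=T(q^2)=q^{2/5}/R(q^2)$, a direct substitution turns the two quantities on the left of \eqref{xy2} and \eqref{x2-by-y} into
$$xy^2=\dfrac{q}{R(q)R(q^2)^2},\qquad \dfrac{x^2}{y}=\dfrac{R(q^2)}{R(q)^2},$$
so that \eqref{xy2} and \eqref{x2-by-y} become equivalent to the modular relations
$$\dfrac{1}{R(q)R(q^2)^2}-R(q)R(q^2)^2=\dfrac{E_2E_5^5}{qE_1E_{10}^5},\qquad \dfrac{R(q^2)}{R(q)^2}-\dfrac{R(q)^2}{R(q^2)}=\dfrac{4qE_1E_{10}^5}{E_2E_5^5}.$$
These are exactly the two fundamental level-$10$ identities for $R(q)$ and $R(q^2)$; I would take \eqref{xy2} and \eqref{x2-by-y} as the facts to be established first, and then obtain \eqref{y3-by-x} and \eqref{x3y} from them by algebra alone.

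For that algebraic step, set $P=xy^2$ and $Q=x^2/y$, so that \eqref{xy2} reads $P-q^2/P=K$ and \eqref{x2-by-y} reads $Q-1/Q=4q/K$. The key observation is that $x^3y=PQ$ and $y^3/x=P/Q$, whence the left-hand sides of \eqref{x3y} and \eqref{y3-by-x} are $PQ+q^2/(PQ)$ and $P/Q+q^2Q/P$ respectively. Multiplying the two defining relations gives
$$\left(PQ+\dfrac{q^2}{PQ}\right)-\left(\dfrac{P}{Q}+\dfrac{q^2Q}{P}\right)=\left(P-\dfrac{q^2}{P}\right)\left(Q-\dfrac{1}{Q}\right)=K\cdot\dfrac{4q}{K}=4q,$$
so \eqref{x3y} minus \eqref{y3-by-x} equals $4q$. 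Adding them instead, and using $(P+q^2/P)^2=K^2+4q^2$ from \eqref{xy2} together with $(Q+1/Q)^2=16q^2/K^2+4$ from \eqref{x2-by-y}, gives
$$\left(PQ+\dfrac{q^2}{PQ}\right)+\left(\dfrac{P}{Q}+\dfrac{q^2Q}{P}\right)=\left(P+\dfrac{q^2}{P}\right)\left(Q+\dfrac{1}{Q}\right)=\sqrt{K^2+4q^2}\cdot\dfrac{2}{K}\sqrt{K^2+4q^2}=2\left(K+\dfrac{4q^2}{K}\right),$$
the positive square roots being the correct branches since all quantities tend to positive limits as $q\to0^+$. Solving the resulting pair of linear equations for the sum and the difference then yields \eqref{x3y} and \eqref{y3-by-x} exactly as stated.

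It remains to prove the two base identities \eqref{xy2} and \eqref{x2-by-y}, and this is where the real work lies. My approach would be to write $x$ and $y$ as quotients of Ramanujan theta functions via the Jacobi triple product \eqref{jtpi}, namely $x=f(-q^2,-q^3)/f(-q,-q^4)$ and the analogue for $y$ obtained by replacing $q$ with $q^2$, so that after clearing denominators the combinations $xy^2\mp q^2/(xy^2)$ and $x^2/y\mp y/x^2$ appear as a sum and a difference of products of theta functions satisfying the condition $ab=cd$ of Lemma \ref{fabcd}. One can then collapse each such expression into a single product by applying the addition formulas \eqref{fabcdi} and \eqref{fabcdii} with appropriately chosen parameters, and finally identify the resulting theta product with $K$ (respectively $4q/K$) through \eqref{jtpi}. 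Alternatively, one could derive \eqref{xy2} and \eqref{x2-by-y} from the classical modular equation $\bigl(R(q^2)-R(q)^2\bigr)/\bigl(R(q^2)+R(q)^2\bigr)=R(q)R(q^2)^2$ together with the fifth-power identity \eqref{E1-6} and its image under $q\mapsto q^2$. Either way, the main obstacle is the bookkeeping needed to select the right theta-function parameters (equivalently, the right dissection) so that the combination telescopes to the single eta-quotient $K=E_2E_5^5/(E_1E_{10}^5)$; once that matching is achieved, \eqref{xy2}--\eqref{x3y} all follow.
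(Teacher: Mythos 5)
Your reduction of \eqref{y3-by-x} and \eqref{x3y} to \eqref{xy2} and \eqref{x2-by-y} is correct and clean: with $P=xy^2$ and $Q=x^2/y$ one indeed has $PQ=x^3y$ and $P/Q=y^3/x$; the expansion $(P-q^2/P)(Q-1/Q)=(PQ+q^2/(PQ))-(P/Q+q^2Q/P)=4q$ gives the difference of the two target identities, and since $P+q^2/P$ and $Q+1/Q$ are power series with constant terms $1$ and $2$ respectively, the square-root branches are forced, so the sum equals $2(K+4q^2/K)$; solving the resulting linear system settles that half by pure algebra. For the record, the paper under review does not prove this lemma at all: it is imported verbatim from Lemma 1.3 of the authors' earlier work \cite{baruah-begum}, so the relevant comparison is with that source, where the identities are actually established.

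The genuine gap is that you never prove \eqref{xy2} and \eqref{x2-by-y}, which is where all the analytic content of the lemma lives. Rewriting them in terms of $R(q)$ and $R(q^2)$ and calling them ``the two fundamental level-$10$ identities'' is only a restatement, not a proof, and the two strategies you then sketch --- collapsing theta-product combinations via \eqref{fabcdi} and \eqref{fabcdii}, or combining the classical modular equation $\bigl(R(q^2)-R(q)^2\bigr)/\bigl(R(q^2)+R(q)^2\bigr)=R(q)R(q^2)^2$ with \eqref{E1-6} --- are left entirely unexecuted. You exhibit no choice of the parameters $a,b,c,d$ in Lemma \ref{fabcd}, and no verification that the resulting theta products telescope to the eta quotient $K=E_2E_5^5/(E_1E_{10}^5)$; your own closing remark that ``once that matching is achieved, \eqref{xy2}--\eqref{x3y} all follow'' concedes exactly the point that needs proving. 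Note also that even granting the classical modular equation, one still needs a level-$10$ eta-quotient evaluation (the analogue of \eqref{E1-6}) to make $K$ appear, and that step is nontrivial and absent. As written, your argument establishes only the implication that \eqref{xy2} and \eqref{x2-by-y} imply \eqref{y3-by-x} and \eqref{x3y}, not the lemma itself.
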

\begin{lemma}\label{lemmaAB}\textup{(Baruah and Begum \cite[Eqs. (2.6), (2.7), (2.29)]{baruah-begum})}
\begin{align}\label{A-4qB}
\dfrac{E^5_5}{E_1^{4}E^{3}_{10}}&=\dfrac{E_5}{E_2^{2}E_{10}}+4q\dfrac{E^{2}_{10}}{E_1^{3}E_2},\\
\label{A-qB}
\dfrac{E^3_2E_5^2}{E_1^{2}E^2_{10}}&=\dfrac{E^5_5}{E_1E^{3}_{10}}
+q\dfrac{E^{2}_{10}}{E_2},\\
\label{A-5qB}
\dfrac{E^3_2E_5^2}{E_1^5E^{2}_{10}}&=\dfrac{E_5}{E_2^2E_{10}}+5q\dfrac{E^2_{10}}{E^3_1E_2}.
\end{align}
\end{lemma}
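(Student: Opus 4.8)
The plan is to reduce the three stated identities to two, and then to clear denominators so that what remains are two ordinary eta-product identities of level $10$. Write $K:=\dfrac{E_2E_5^5}{E_1E_{10}^5}$, as in Lemma \ref{xy}. Using the trivial factorizations $\dfrac{E_5^5}{E_1^4E_{10}^3}=K\cdot\dfrac{E_{10}^2}{E_1^3E_2}$ and $\dfrac{E_5^5}{E_1E_{10}^3}=K\cdot\dfrac{E_{10}^2}{E_2}$, dividing \eqref{A-4qB} by $\dfrac{E_{10}^2}{E_1^3E_2}$ and \eqref{A-qB} by $\dfrac{E_{10}^2}{E_2}$ shows that they are equivalent, respectively, to
\begin{align*}
\dfrac{E_1^3E_5}{E_2E_{10}^3}=K-4q \qquad\text{and}\qquad \dfrac{E_2^4E_5^2}{E_1^2E_{10}^4}=K+q.
\end{align*}
Dividing \eqref{A-5qB} by $\dfrac{E_{10}^2}{E_1^3E_2}$ likewise turns it into $\dfrac{E_2^4E_5^2}{E_1^2E_{10}^4}-\dfrac{E_1^3E_5}{E_2E_{10}^3}=5q$, which is exactly the difference of the two displayed relations. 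Hence \eqref{A-5qB} is a purely formal consequence of \eqref{A-4qB} and \eqref{A-qB}, and it suffices to prove the two displayed identities. Clearing denominators, these become the equivalent eta-product identities
\begin{align*}
E_2^2E_5^5&=E_1^4E_5E_{10}^2+4qE_1E_2E_{10}^5,\\
E_2^4E_5^2E_{10}&=E_1E_2E_5^5+qE_1^2E_{10}^5.
\end{align*}

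To establish these two, my primary route is the $T$-function machinery already assembled. Setting $x=T(q)$ and $y=T(q^2)$, I would express the mixed quotients $\dfrac{E_1^3E_5}{E_2E_{10}^3}$ and $\dfrac{E_2^4E_5^2}{E_1^2E_{10}^4}$ as Laurent expressions in $x$, $y$, and $q$, reduce the two target identities to algebraic relations in these variables, and then verify them using \eqref{xy2}--\eqref{x3y} together with the Rogers--Ramanujan relations \eqref{E1} and \eqref{E1-6}; recall that \eqref{xy2} already records $K=xy^2-q^2/(xy^2)$, so $K-4q$ and $K+q$ are immediately available on the right-hand sides. As a completely independent fallback, each cleared identity is a modular-form identity of weight $7/2$ on $\Gamma_0(10)$: after multiplying through by a suitable eta factor $\eta(10\tau)$ to reach integral weight $4$, one checks holomorphy at every cusp via the standard order formula for eta quotients and then confirms the identity by matching $q$-expansions up to the Sturm bound $\lfloor 4\cdot 18/12\rfloor=6$, i.e.\ through $q^6$.

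The main obstacle lies in the first route: identity \eqref{E1-6} delivers only the even power $E_1^6/E_5^6$ as a clean function of $x$, whereas the two quotients above carry an odd net power of $E_1/E_5$, so they are not directly reachable from \eqref{E1-6} alone. What is really needed is the correct $x,y$-representation of these asymmetric quotients---equivalently, a modular equation relating $R(q)$ and $R(q^2)$ of the right shape---and pinning this down, rather than the ensuing algebra, is where the effort concentrates. Should that identification resist a clean closed form, I would simply invoke the modular-forms verification above, which is guaranteed to succeed after a short finite computation.
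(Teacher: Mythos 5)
Your proposal cannot be measured against a proof in the paper, because the paper offers none: Lemma \ref{lemmaAB} is quoted without proof from the authors' earlier work \cite[Eqs. (2.6), (2.7), (2.29)]{baruah-begum}, so your attempt must stand on its own. Its best part is the opening reduction, which is correct: dividing \eqref{A-4qB} and \eqref{A-5qB} by $E_{10}^2/(E_1^3E_2)$, and \eqref{A-qB} by $E_{10}^2/E_2$, turns the three identities into
\begin{align*}
\dfrac{E_1^3E_5}{E_2E_{10}^3}=K-4q,\qquad\dfrac{E_2^4E_5^2}{E_1^2E_{10}^4}=K+q,
\end{align*}
and their difference, so \eqref{A-5qB} is indeed a formal consequence of the other two, and the two cleared eta-product identities you display are true (they check against $q$-expansions).

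However, both routes you offer for the two remaining identities need repair. Route (a) is not merely ``where the effort concentrates''; with the tools you allow, it cannot be completed. The only eta quotients that Lemma \ref{xy} and \eqref{E1-6} express in terms of $(x,y,q)$ are $K$, $E_1^6/E_5^6$ and $E_2^6/E_{10}^6$, and comparing exponent vectors shows that your target satisfies $E_1^3E_5/(E_2E_{10}^3)=K\,(E_1^6/E_5^6)^{2/3}(E_2^6/E_{10}^6)^{-1/3}$, so it requires fractional powers of the available quotients --- this is exactly your odd-power obstruction, and \eqref{E1} does not remove it, since it introduces $E_{25}$ and $T(q^5)$ rather than eliminating anything. (Note also that Lemma \ref{xy} comes from the same source \cite{baruah-begum}, where such $x,y$-relations are typically derived from product identities of precisely this kind, so leaning on it courts circularity.) Route (b) is a legitimate proof method, but your normalization is faulty: after multiplying by $\eta(10\tau)$, the terms --- e.g. $\eta(\tau)^4\eta(5\tau)\eta(10\tau)^3$, with $\sum_d d\,r_d=39$ --- violate the Ligozat condition $\sum_d d\,r_d\equiv0\pmod{24}$, so they are not modular with character on $\Gamma_0(10)$ and the bound $6$ does not apply as stated; multiplying instead by $\eta(\tau)^2\eta(2\tau)\eta(5\tau)\eta(10\tau)$ makes every term satisfy both Ligozat congruences, yielding weight-$6$ forms on $\Gamma_0(10)$ and a Sturm bound of $9$. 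The cleanest fix, though, needs no computation at all: your two normalized identities are classical degree-$5$ theta identities recorded in \cite[Chapter 19]{bcb3}. Indeed, $\psi(q)^2-q\psi(q^5)^2=f(q,q^4)f(q^2,q^3)=E_2E_5^3/(E_1E_{10})$ becomes, upon division by $\psi(q^5)^2=E_{10}^4/E_5^2$, exactly $E_2^4E_5^2/(E_1^2E_{10}^4)=K+q$; and $\varphi(q)^2-\varphi(q^5)^2=4qf(q,q^9)f(q^3,q^7)$ becomes, after $q\mapsto-q$, the product evaluation \eqref{1937} proved in this very paper, and routine algebra with \eqref{varphi}, exactly $E_1^3E_5/(E_2E_{10}^3)=K-4q$. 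Citing these two identities closes the gap and makes your argument complete.
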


\section{\textbf{An exact generating function of $a(10n+9)$ and proofs of \eqref{a50n} and \eqref{a1250n}}}\label{sec3}
\begin{theorem} \label{th-a10n9} The generating function of $a(10n+9)$ is given by
\begin{align}\label{gen-a10n9}
\sum_{n=0}^\infty a(10n+9)q^n&=5\Big(46\dfrac{E_5E_{10}^2}{E_2^{2}} +460
q\dfrac{E_{10}^5}{E_1^3E_2}+1125q^2\dfrac{E_{10}^8}{E_1^6E_5} \notag\\
&\quad+1875q\dfrac{E_2^8E_5^{9}}{E_1^{16}}+15625q^2\dfrac{E_2^8E_5^{15}}{E_1^{22}}\Big).
\end{align}
\end{theorem}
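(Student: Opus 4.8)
The plan is to translate the statement into the extraction of a single residue class from a clean infinite product and then to execute a $5$-dissection. First I would use Chan's identity recorded in the Introduction with $p=2$ and $j=1$. Here $(p-j)j=1$ and $E_p=E_2$, so that identity reads $\sum_{n\ge0}a_{1,2}(2n+1)q^n=2E_2^4/\left(E_1^3(q;q^2)_\infty^2\right)$. Since $2a(n)=a_{1,2}(n)$ and $(q;q^2)_\infty=E_1/E_2$, this collapses to
\begin{align*}
\sum_{n=0}^\infty a(2n+1)q^n=\dfrac{E_2^6}{E_1^5}.
\end{align*}
As $10n+9=2(5n+4)+1$, the number $a(10n+9)$ is exactly the coefficient of $q^{5n+4}$ in $E_2^6/E_1^5$. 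Hence Theorem \ref{th-a10n9} is equivalent to showing that the $(5n+4)$-component of $E_2^6/E_1^5$, after the substitution $q^5\mapsto q$, equals the right-hand side of \eqref{gen-a10n9}.

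Next I would compute the $5$-dissection of $E_2^6/E_1^5$. I would substitute the $5$-dissection \eqref{1byE1} of $1/E_1$ (raised to the fifth power) together with the $5$-dissection \eqref{E1} of $E_1$ taken at $q\mapsto q^2$ to expand $E_2^6$. Writing $X:=T(q^5)$ and $Y:=T(q^{10})$, each building block $E_5,E_{10},E_{25},E_{50}$, as well as $X$ and $Y$, is a power series in $q^5$, so the residue modulo $5$ of a monomial's $q$-exponent is governed solely by the explicit powers of $q$ appearing in the two dissection brackets. Collecting those monomials whose exponent is $\equiv 4 \pmod{5}$ and then replacing $q^5$ by $q$ (so that $X\mapsto T(q)$, $Y\mapsto T(q^2)$ and $E_5,E_{10},E_{25},E_{50}\mapsto E_1,E_2,E_5,E_{10}$) produces a Laurent expression in $x:=T(q)$ and $y:=T(q^2)$ with coefficients built from $E_1,E_2,E_5,E_{10}$.

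At this point I would invoke the relations of Lemma \ref{xy}. With $K=E_2E_5^5/\left(E_1E_{10}^5\right)$, equations \eqref{xy2}--\eqref{x3y} let me rewrite the combinations of $x$ and $y$ that survive the extraction in terms of $K$ and $q$ alone, thereby eliminating $x$ and $y$ and turning the whole expression into a rational function of $E_1,E_2,E_5,E_{10}$. Finally, the three identities \eqref{A-4qB}--\eqref{A-5qB} of Lemma \ref{lemmaAB}, which interrelate the basic quotients $E_5/\left(E_2^2E_{10}\right)$, $E_{10}^2/\left(E_1^3E_2\right)$, and their companions, would be used to reorganize this rational function into the five displayed terms of \eqref{gen-a10n9}, with the common factor $5$ emerging from the $(5n+4)$-extraction exactly as in Ramanujan's identity \eqref{rama-beautiful}.

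I expect the principal obstacle to be the explicit $5$-dissection itself, and within it the expansion of the fifth power of the nine-term series \eqref{1byE1}: this is a Laurent polynomial of large degree in $X$, and keeping track of the $\equiv 4 \pmod{5}$ contributions while it is multiplied against the sixth power of the $E_2$-bracket is heavy and error-prone, so it is naturally carried out with the aid of symbolic computation. The subsequent collapse via Lemmas \ref{xy} and \ref{lemmaAB} is conceptually routine but demands careful bookkeeping of the powers of $q$ in order to land on precisely the coefficients $46$, $460$, $1125$, $1875$, and $15625$.
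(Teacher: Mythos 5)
Your first step contains a fatal error, and it propagates through everything that follows. In Chan's identity $\sum_{n\ge0} a_{j,p}(pn+(p-j)j)q^n = p\,E_p^4/\bigl(E_1^3 (q^j,q^{p-j};q^p)_\infty^2\bigr)$, the symbol $(q^j,q^{p-j};q^p)_\infty$ denotes the \emph{product} $(q^j;q^p)_\infty(q^{p-j};q^p)_\infty$. For $p=2$, $j=1$ both entries equal $q$, so $(q,q;q^2)_\infty^2=(q;q^2)_\infty^4$, not $(q;q^2)_\infty^2$ as you wrote. Carrying this through gives
\begin{align*}
\sum_{n=0}^\infty a(2n+1)q^n=2\,\dfrac{E_2^4}{E_1^3}\cdot\dfrac{1}{2}\,\dfrac{E_2^4}{E_1^4}=\dfrac{E_2^8}{E_1^7},
\end{align*}
which is exactly the paper's starting point (quoted from Chan's Eq.\ (5.1)), and not your $E_2^6/E_1^5$. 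The two series differ already in their low-order coefficients: $E_2^8/E_1^7=1+7q+27q^2+\cdots$ while $E_2^6/E_1^5=1+5q+14q^2+\cdots$, and one checks directly from the definition of $\phi(q)$ that $a(1)=1$, $a(3)=7$, $a(5)=27$. Since your entire plan is an exact coefficient extraction from this series, it cannot terminate at \eqref{gen-a10n9} as written.

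Even with the corrected series, your dissection strategy has a structural gap that the paper explicitly warns about. You propose to $5$-dissect the numerator $E_2^8$ via \eqref{E1} at $q\mapsto q^2$ and the denominator $1/E_1^7$ via the seventh power of \eqref{1byE1}, and then to eliminate the surviving mixed Laurent monomials $x^ay^b$ (with $x=T(q)$, $y=T(q^2)$) using Lemma \ref{xy}. But \eqref{xy2}--\eqref{x3y} handle only four special combinations, and there is no justification that the wide range of mixed monomials produced by expanding an eighth power against a seventh power reduces to the algebra they generate; the Remark following the proof of Theorem \ref{th-a10n9} records precisely this obstruction (it is the stated reason why only the class $a(10n+9)$, and not $a(10n+2r+1)$ for $r<4$, admits such a closed form). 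The paper's proof avoids mixed monomials altogether: it first rewrites $E_2^8/E_1^7$ via \eqref{A-qB} and \eqref{A-5qB} as $\dfrac{E_5^2}{E_1}+6q\dfrac{E_{10}^5}{E_2E_5^3}+25q^2\dfrac{E_{10}^8}{E_1^3E_5^4}$, so that each term needs the dissection of only a single factor ($1/E_1$, $1/E_2$, or $1/E_1^3$); extracting the $q^{5n+4}$ part then leaves powers of $x$ alone, which are removed by the single identity \eqref{E1-6}, after which Lemma \ref{lemmaAB} yields \eqref{gen-a10n9}. To salvage your outline you would need both the corrected starting identity and some such preliminary reduction (or new $x$, $y$ relations beyond Lemma \ref{xy}).
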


Note that the congruence \eqref{a10n9} immediately follows from \eqref{gen-a10n9}.
\begin{pf1}
From \cite[Eq. (5.1)]{chan-actarith}, we have
$$\sum_{n=0}^\infty a(2n+1)q^n=\dfrac{E_2^8}{E_1^7},$$
which, with the aid of \eqref{A-qB} and \eqref{A-5qB}, may be simplified as
\begin{align}\label{a10nz}
\sum_{n=0}^\infty a(2n+1)q^n&=\dfrac{E_2^3E_{10}}{E_1^2E_5}+5q\dfrac{E_2^4E_{10}^4}{E_1^5E_5^2}\notag\\
&=\left(\dfrac{E_5^2}{E_1}+q\dfrac{E_{10}^5}{E_2E_5^3}\right)+5q\left(\dfrac{E_{10}^5}{E_2E_5^3}+5q\dfrac{E_{10}^8}{E_1^3E_5^4}\right)\notag\\
&=\dfrac{E_5^2}{E_1}+6q\dfrac{E_{10}^5}{E_2E_5^3}+25q^2\dfrac{E_{10}^8}{E_1^3E_5^4}.
\end{align}
Employing \eqref{1byE1} in the above, extracting the terms involving $q^{5n+4}$, dividing both sides of the resulting identity
by $q^4$, and then replacing $q^5$ by $q$, we find that
\begin{align}\label{a10nzz}
\sum_{n=0}^\infty a(10n+9)q^n&=5\dfrac{E_5^5}{E_1^4}+30q\dfrac{E_{10}^5}{E_1^3E_2}+1775q^2 \dfrac{E_2^8E_5^{15}}{E_1^{22}}+4425\dfrac{E_2^8E_5^{15}}{E_1^{22}}\left(T(q)^5-\dfrac{q^2}{T(q)^5}\right)\notag\\
&\quad+225\dfrac{E_2^8E_5^{15}}{E_1^{22}}\left(T(q)^{10}+\dfrac{q^4}{T(q)^{10}}\right).
\end{align}
By \eqref{E1-6}, the above can be simplified to
\begin{align*}
\sum_{n=0}^\infty a(10n+9)q^n&=5\Big(\dfrac{E_5^5}{E_1^4}+6q\dfrac{E_{10}^5}{E_1^3E_2} +45\dfrac{E_2^8E_5^{3}}{E_1^{10}}+1875q\dfrac{E_2^8E_5^{9}}{E_1^{16}}+15625q^2\dfrac{E_2^8E_5^{15}}{E_1^{22}}\Big).
\end{align*}
Employing \eqref{A-qB} in the above, we arrive at
\begin{align}\label{1-a10n9}
\sum_{n=0}^\infty a(10n+9)q^n&=5\Big(\dfrac{E_2^3E_5^2E_{10}}{E_1^5}+5q\dfrac{E_{10}^5}{E_1^3E_2} +45\dfrac{E_2^8E_5^{3}}{E_1^{10}}+1875q\dfrac{E_2^8E_5^{9}}{E_1^{16}}\notag\\
&\quad+15625q^2\dfrac{E_2^8E_5^{15}}{E_1^{22}}\Big).
\end{align}
With the aid of \eqref{A-5qB}, the above can be further simplified to
\begin{align}\label{2-a10n9}
\sum_{n=0}^\infty a(10n+9)q^n&=5\Big(45\left(\dfrac{E_2^3E_5^{2}E_{10}}{E_1^{5}}+5q\dfrac{E_2^4E_5E_{10}^4}{E_1^8}\right)+ \dfrac{E_2^3E_5^2E_{10}}{E_1^5}+5q\dfrac{E_{10}^5}{E_1^3E_2} \notag\\
&\quad+1875q\dfrac{E_2^8E_5^{9}}{E_1^{16}}+15625q^2\dfrac{E_2^8E_5^{15}}{E_1^{22}}\Big)\notag\\
&=5\Big(46\left(\dfrac{E_5E_{10}^2}{E_2^{2}} +5q\dfrac{E_{10}^5}{E_1^3E_2}\right) +5q\dfrac{E_{10}^5}{E_1^3E_2}+225q\left(\dfrac{E_{10}^5}{E_1^3E_2}+5q\dfrac{E_{10}^8}{E_1^6E_5}\right) \notag\\
&\quad+1875q\dfrac{E_2^8E_5^{9}}{E_1^{16}}+15625q^2\dfrac{E_2^8E_5^{15}}{E_1^{22}}\Big),
\end{align}
which is equivalent to \eqref{gen-a10n9}.
\end{pf1}

\begin{remark} If we extract the terms involving $q^{5n+r}$, $r=0,1,2,3$ after employing \eqref{1byE1} in \eqref{a10nz}, then we arrive at the generating functions of $a(10n+2r+1)$ as in \eqref{a10nzz}. But in these cases, the expressions involving $T(q)$ and $T(q^2)$  could not be expressed in terms of $E_1, E_2, E_5$ and $E_{10}$ as in the above case because of the non-availability of the expressions similar to those in \eqref{E1-6} -- \eqref{x3y}. Therefore, in Theorem \ref{th-a10n9}, we considered only the case $a(10n+9)$ among $a(10n+2r+1)$ where $0\leq r\leq 4$. 
\end{remark}

As corollaries to the above theorem, we now deduce the congruences in \eqref{a50n} originally conjectured by Chan in \cite{chan-actarith} and the new congruences in \eqref{a1250n}.

\begin{corollary} The congruences in \eqref{a50n} hold good.
\end{corollary}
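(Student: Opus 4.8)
The plan is to read off all three congruences directly from the exact generating function \eqref{gen-a10n9} by a residue analysis modulo $5$. Since $50n+19=10(5n+1)+9$, $\,50n+39=10(5n+3)+9$, and $50n+49=10(5n+4)+9$, the three quantities $a(50n+19)$, $a(50n+39)$, $a(50n+49)$ are exactly the coefficients of $q^{5n+1}$, $q^{5n+3}$, and $q^{5n+4}$ in $\sum_{n\ge0}a(10n+9)q^n$. Because the right-hand side of \eqref{gen-a10n9} carries an overall factor of $5$, it suffices to prove that the integral power series $F(q)$ defined by $\sum_{n\ge0}a(10n+9)q^n=5F(q)$ has, modulo $5$, no terms in the residue classes $1,3,4\pmod 5$; the asserted congruences modulo $25$ then follow at once.

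First I would reduce $F(q)$ modulo $5$. Of the five integer coefficients $46,460,1125,1875,15625$ occurring in \eqref{gen-a10n9}, only $46\equiv1$ is not divisible by $5$, and each of the five summands is an integral power series, so $F(q)\equiv \dfrac{E_5E_{10}^2}{E_2^2}\pmod 5$. Next I would clear the denominator using the elementary congruence $E_j^5\equiv E_{5j}\pmod 5$, which is immediate from the binomial theorem. From $E_{10}^2\equiv E_2^{10}$ and the invertibility of $E_2$ one gets $\dfrac{E_{10}^2}{E_2^2}\equiv E_2^8\pmod5$, and then $E_2^8=E_2^5E_2^3\equiv E_{10}E_2^3$, so that
\[
F(q)\equiv E_5E_{10}E_2^3 \pmod 5.
\]

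The heart of the argument is a residue computation for the cube $E_2^3$. Applying Jacobi's identity \eqref{jacobi} with $q$ replaced by $q^2$ gives $E_2^3=\sum_{k\ge0}(-1)^k(2k+1)q^{k(k+1)}$. As $k$ runs through the residues $0,1,2,3,4\pmod5$, the exponent $k(k+1)$ lies in the classes $0,2,1,2,0\pmod5$ respectively; the class $1$ is reached only when $k\equiv2\pmod5$, and there $2k+1\equiv0\pmod5$ annihilates the coefficient, while the classes $3$ and $4$ never occur at all. Hence $E_2^3$ is supported modulo $5$ on exponents $\equiv0,2\pmod5$, and since $E_5E_{10}$ contributes only exponents $\equiv0\pmod5$, the product $E_5E_{10}E_2^3$, and therefore $F(q)$, is supported modulo $5$ on exponents $\equiv0,2\pmod5$. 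This is precisely the desired vanishing in the classes $1,3,4$. I do not anticipate a genuine obstacle: once the overall factor $5$ is extracted, everything is an elementary sieving argument, and the only point demanding care is the bookkeeping that converts the denominator $1/E_2^2$ into a product via $E_{10}^2\equiv E_2^{10}$ so that Jacobi's identity may be applied to $E_2^3$. The governing mechanism---that the cubic theta series omits two residue classes modulo $5$ and is killed in a third---is exactly the one behind $p(5n+4)\equiv0\pmod5$, and it transfers verbatim here.
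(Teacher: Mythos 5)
Your proposal is correct and follows essentially the same route as the paper: reduce \eqref{gen-a10n9} modulo $25$ so that only the $46\,E_5E_{10}^2/E_2^2$ term survives, convert it to $E_5E_{10}E_2^3$ via $E_2^5\equiv E_{10}\pmod 5$, and then sieve with Jacobi's identity \eqref{jacobi}, noting that $k(k+1)$ never lies in the classes $3,4\pmod 5$ and that the class $1$ forces $2k+1\equiv 0\pmod 5$. The only difference is cosmetic bookkeeping (you factor out the $5$ and work with $F(q)$ modulo $5$, while the paper works modulo $25$ throughout).
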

\begin{proof}By the binomial theorem, we have
\begin{align}\label{binomial}E_1^5\equiv E_5 ~(\textup{mod}~5).\end{align}
Taking congruences modulo $5$ in \eqref{gen-a10n9} and using the above, we see that
\begin{align*}
\sum_{n=0}^\infty a(10n+9)q^n&\equiv5\times 46~E_5E_{10}E_2^3~(\textup{mod}~25),
\end{align*}
which can be rewritten with the aid of \eqref{jacobi} as
\begin{align}
\sum_{n=0}^\infty a(10n+9)q^n&\equiv5\times 46~E_5E_{10}\sum_{k=0}^\infty(-1)^k(2k+1)q^{k(k+1)}~(\textup{mod}~25).
\end{align}
As $k(k+1)\equiv0,~ 1,~ \textup{or}~2~(\textup{mod}~5)$, equating the coefficients of $q^{5n+r}$, $r=3,4$ from both sides of the above, we easily arrive at the last two congruences of \eqref{a50n}. Furthermore, we note that $k(k+1)\equiv 1~ (\textup{mod}~5)$ only when $k\equiv 2~ (\textup{mod}~5)$, that is, only when
$2k+1\equiv 0~ (\textup{mod}~5)$. Therefore, equating the coefficients of $q^{5n+1}$ from both sides of the above we arrive at the other congruence of \eqref{a50n}, to complete the proof. \end{proof}

\begin{corollary} The congruences in \eqref{a1250n} hold good.
\end{corollary}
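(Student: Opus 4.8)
The plan is to work modulo $125$ directly from the exact generating function \eqref{gen-a10n9}, mirroring the strategy used for the modulo $25$ congruences but pushing one prime-power further. First I would reduce each of the five terms in \eqref{gen-a10n9} modulo $125$. The overall factor of $5$ means I only need each bracketed term modulo $25$. The crucial observation is that the last three terms carry explicit coefficients $460$, $1125$, $1875$, $15625$: modulo $25$ we have $1125\equiv 0$, $1875\equiv 0$, and $15625\equiv 0$, so after extracting the factor $5$ those terms contribute nothing modulo $125$. This leaves only the first two terms, and in fact $460=5\cdot 92$, so the term $460q E_{10}^5/(E_1^3E_2)$ already carries an extra factor of $5$; hence modulo $125$ the generating function collapses to
\begin{align*}
\sum_{n=0}^\infty a(10n+9)q^n&\equiv 5\cdot 46\,\dfrac{E_5E_{10}^2}{E_2^{2}}+2300\,q\dfrac{E_{10}^5}{E_1^3E_2}\pmod{125}.
\end{align*}

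Next I would dissect this reduced expression to isolate the arithmetic progression relevant to \eqref{a1250n}. The congruences \eqref{a1250n} concern $a(1250n+250r+219)$; since the generating function above is for $a(10n+9)$, I set $10m+9=1250n+250r+219$, giving $m=125n+25r+21$, so I must extract the coefficient of $q^{125n+25r+21}$ from the right-hand side and show it vanishes modulo $125$ for $r=1,3,4$. I would perform this in two stages: first a $5$-dissection to pick out $q^{5k+1}$ (since $25r+21\equiv 1\pmod 5$), then a second $5$-dissection to reach the residue $25r+21 \pmod{25}$, i.e. residues $21,46,16\cdot$ for $r=1,3,4$ respectively $\pmod{25}$ reducing to $21,21,21\pmod{25}$—more precisely I track residues modulo $25$ and then modulo $125$. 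For the dissections I would use \eqref{binomial} to replace $E_1^5$ by $E_5$ where beneficial, reduce the powers of $1/E_1$ via \eqref{1byE1}, and use the $q$-product identities of Lemma \ref{xy} and Lemma \ref{lemmaAB} together with Jacobi's identities \eqref{jacobi} and \eqref{jacobi-type} to express the relevant theta quotients as explicit lacunary series whose exponents I can analyze modulo $25$.

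The term $5\cdot 46\,E_5E_{10}^2/E_2^2$ is the easier of the two: writing $1/E_2^2\equiv E_2^3/E_{10}\pmod 5$ (from \eqref{binomial} applied with $q\mapsto q^2$) upgrades the modulus, and the resulting product $E_5E_{10}E_2^3$ can be expanded via \eqref{jacobi}, after which a congruence-of-exponents argument — the same mechanism that produced the vanishing in the modulo $25$ corollary, where $k(k+1)\equiv 1\pmod 5$ forces $2k+1\equiv 0\pmod 5$ — should deliver an extra factor of $5$ on the targeted progressions. The genuinely harder term is $2300\,q E_{10}^5/(E_1^3E_2)$: here the factor $2300=4\cdot 575=100\cdot 23$ contributes $2300\equiv 50\pmod{125}$, so I need the coefficient of the relevant progression in $qE_{10}^5/(E_1^3E_2)$ to be even, or more precisely divisible by $5$, to reach $0\pmod{125}$.

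The main obstacle will be controlling this second term. The factor $1/E_1^3$ is the source of difficulty: unlike $1/E_1$, it does not dissect cleanly via a single application of \eqref{1byE1}, and the iterated dissection generates many cross terms in the continued-fraction quantity $T(q^5)$ whose powers must be simplified using \eqref{E1-6} and the relations \eqref{xy2}–\eqref{x3y}. I expect that, just as in the proof of Theorem \ref{th-a10n9}, the bookkeeping of which powers of $T$ survive in the $q^{5k+1}$ sub-progression, and then tracking those through a second $5$-dissection to residue $21\pmod{25}$, will be the labor-intensive core; the payoff is that on the three progressions $r=1,3,4$ the surviving coefficients should each acquire the needed factor of $5$, while the excluded value $r=0$ (and $r=2$) fails precisely because one of the surviving $T$-powers contributes a unit. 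I would verify the final divisibility by assembling the two simplified pieces and checking that their sum on each targeted residue class is $\equiv 0\pmod{125}$.
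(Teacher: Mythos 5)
Your reduction of \eqref{gen-a10n9} modulo $125$ is correct and agrees with the paper's \eqref{a-cor2-1}: since $5\cdot1125\equiv5\cdot1875\equiv5\cdot15625\equiv0\pmod{125}$, only
$$\sum_{n=0}^\infty a(10n+9)q^n\equiv 5\cdot 46\left(\dfrac{E_5E_{10}^2}{E_2^{2}}+10q\dfrac{E_{10}^5}{E_1^3E_2}\right)\pmod{125}$$
survives, and your indexing $10m+9=1250n+250r+219\Leftrightarrow m=125n+25r+21$ identifies the right progression. The genuine gap is in how you treat the term $5\cdot46\,E_5E_{10}^2/E_2^2$. Its prefactor carries exactly one factor of $5$, so you need the coefficients of $q^{125n+25r+21}$ in $E_5E_{10}^2/E_2^2$ modulo $25$. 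Your plan---replace $1/E_2^2$ by $E_2^3/E_{10}$ via \eqref{binomial} and then run the Jacobi exponent argument on $E_5E_{10}E_2^3$---cannot reach that modulus: the replacement is only a congruence mod $5$, so it discards an error series $5X$ whose contribution $5\cdot 46\cdot 5X$ is invisible modulo $25$ but not modulo $125$; and even on the main part, the exponent mechanism you cite ($k(k+1)\equiv1\pmod5$ forces $5\mid 2k+1$) delivers only one further factor of $5$, giving $5\cdot5\cdot46$, i.e.\ a conclusion modulo $25$---one factor of $5$ short of \eqref{a1250n}. In short, \eqref{binomial} may not be invoked until a second \emph{explicit} factor of $5$ has already been extracted; your proposal locates the ``extra factor of $5$'' in the wrong place.

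The paper supplies that second factor by a mechanism your sketch omits: it computes the \emph{exact} $5$-dissections on the progression $q^{5n+1}$, namely \eqref{a-cor2-2},
$$\left[q^{5n+1}\right]\left\{\dfrac{E_5E_{10}^2}{E_2^{2}}\right\}=10q\dfrac{E_1E_{10}^{4}}{E_2^{4}}+125q^3\dfrac{E_1E_{10}^{10}}{E_2^{10}},$$
and the analogous identity \eqref{a-cor2-3} for $10qE_{10}^5/(E_1^3E_2)$, obtained not by dissecting $1/E_1^3$ head-on (your labor-intensive route) but by first trading that term for $\tfrac52\bigl(E_5^5/E_1^4-E_5E_{10}^2/E_2^2\bigr)$ via \eqref{A-4qB} and then applying \eqref{1byE1}, \eqref{E1-6} and Lemma \ref{lemmaAB}. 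Every coefficient in those dissections ($10$, $125$, $150$, $5650,\dots$) is divisible by $5$, which upgrades the prefactor from $5\cdot46$ to $5^2\cdot 92$; only after this upgrade does the paper reduce the surviving series mod $5$ with \eqref{binomial}, dissect once more with \eqref{1byE1} to reach $5^2\cdot92\,E_5E_{10}E_2^3$, and finish with the Jacobi exponent argument. That divisibility-by-$5$ of the dissection coefficients is a fact about the series which the crude binomial reduction cannot detect; without it (or the substantial explicit computation it rests on, which your proposal defers as ``bookkeeping''), the argument terminates at modulus $25$---that is, at the already-known congruences \eqref{a50n}---rather than at \eqref{a1250n}.
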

\begin{proof}From \eqref{gen-a10n9}, we have
\begin{align}\label{a-cor2-1}
\sum_{n=0}^\infty a(10n+9)q^n&\equiv5\times 46 ~\left(\dfrac{E_5E_{10}^2}{E_2^{2}}+10q\dfrac{E_{10}^5}{E_1^{3}E_2}\right)~(\textup{mod}~125).
\end{align}

Now, let $\left[q^{5n+r}\right]\left\{F(q)\right\}$, $r=0,1,\ldots, 4$ denotes the terms after extracting the terms involving $q^{5n+r}$, dividing by $q^r$ and then replacing $q^5$ by $q$.

With the aid of \eqref{1byE1}, we have
\begin{align*}
\left[q^{5n+1}\right]\left\{\dfrac{E_5E_{10}^2}{E_2^{2}}\right\}=\dfrac{E_1E_{10}^{10}}{E_2^{10}}~\left(15q^3+10q\left(T(q^2)^5-\dfrac{q^4}{T(q^2)^5}\right)\right),
\end{align*}
which, by \eqref{E1-6}, implies
\begin{align}\label{a-cor2-2}
\left[q^{5n+1}\right]\left\{\dfrac{E_5E_{10}^2}{E_2^{2}}\right\}=10q\dfrac{E_1E_{10}^{4}}{E_2^{4}}+125q^3\dfrac{E_1E_{10}^{10}}{E_2^{10}}.
\end{align}

Again, by \eqref{A-4qB} and
\eqref{1byE1}, we obtain
\begin{align*}
\left[q^{5n+1}\right]\left\{10q\dfrac{E_{10}^5}{E_1^3E_2}\right\}&=\left[q^{5n+1}\right]\left\{\dfrac{5}{2}\left(\dfrac{E_{5}^5}{E_1^4}
-\dfrac{E_{5}E_{10}^2}{E_2^2}\right)\right\}\notag\\
&=5\Big(2\dfrac{E_{5}^{20}}{E_1^{19}}\left(x^{15}-\dfrac{q^{6}}{x^{15}}\right)+209q\dfrac{E_{5}^{20}}{E_1^{19}}\left(x^{10}+\dfrac{q^{4}}{x^{10}}\right)\notag\\
&\quad+5q\dfrac{E_1E_{10}^{10}}{E_2^{10}}\left(y^{5}-\dfrac{q^{4}}{y^{5}}\right) +920q^2\dfrac{E_{5}^{20}}{E_1^{19}}\left(x^{5}-\dfrac{q^{2}}{x^{5}}\right)\notag\\
&\quad+\dfrac{q^3}{2}\left(1015\dfrac{E_5^{20}}{E_1^{19}}-15\dfrac{E_1E_{10}^{10}}{E_2^{10}}\right)\Big).
\end{align*}
Employing \eqref{E1-6}, and then simplifying by using the identities in Lemma \ref{lemmaAB}, we find that
\begin{align}\label{a-cor2-3}
\left[q^{5n+1}\right]\left\{10q\dfrac{E_{10}^5}{E_1^3E_2}\right\}&=10\Big(\dfrac{E_1^{14}E_{10}^{3}}{E_2^{15}E_5}+150q
\dfrac{E_1^{11}E_{10}^{6}}{E_2^{14}E_5^2}+5650q^2\dfrac{E_1^{8}E_{10}^{9}}{E_2^{13}E_5^3}+101825q^3\dfrac{E_1^{5}E_{10}^{12}}{E_2^{12}E_5^4}\notag\\
&\quad+1068125q^4\dfrac{E_1^{2}E_{10}^{15}}{E_2^{11}E_5^5}+7042500q^5\dfrac{E_{10}^{18}}{E_1E_2^{10}E_5^6}\notag\\
&\quad+29800000q^6\dfrac{E_{10}^{21}}{E_1^{4}E_2^{9}E_5^7}+79000000q^7\dfrac{E_{10}^{24}}{E_1^{7}E_2^{8}E_5^8}\notag\\
&\quad+120000000q^8\dfrac{E_{10}^{27}}{E_1^{10}E_2^{7}E_5^9}+80000000q^9\dfrac{E_{10}^{30}}{E_1^{13}E_2^{6}E_5^{10}}\Big).
\end{align}
Invoking \eqref{a-cor2-2} and \eqref{a-cor2-3} in \eqref{a-cor2-1}, we obtain
\begin{align*}
\sum_{n=0}^\infty a(50n+19)q^n&\equiv5^2\times 92 ~\left(\dfrac{E_1^{14}E_{10}^{3}}{E_2^{15}E_5}+q\dfrac{E_1E_{10}^{4}}{E_2^{4}}\right)~(\textup{mod}~125),
\end{align*}
which, by \eqref{binomial} reduces to
\begin{align*}
\sum_{n=0}^\infty a(50n+19)q^n&\equiv5^2\times 92 ~\left(\dfrac{E_5^{2}}{E_1}+qE_1E_2E_{10}^{3}\right)~(\textup{mod}~125).
\end{align*}
Employing once again \eqref{1byE1} in the above, extracting the terms involving $q^{5n+4}$, dividing both sides by $q^4$, and then replacing $q^5$ by $q$, we arrive at
 \begin{align*}
\sum_{n=0}^\infty a(250n+219)q^n&\equiv5^2\times 92 ~E_5E_{10}E_2^{3}~(\textup{mod}~125).
\end{align*}
Employing \eqref{jacobi} in the above and then proceeding as in the proof of the previous corollary, we conclude that
 \begin{align*}
a(250(5n+r)+219)&\equiv0~(\textup{mod}~125), \quad \textup{for}~ r=1,3,4.
\end{align*}
Thus, we complete the proof  of \eqref{a1250n}.
\end{proof}

\begin{remark} Proceeding as in the proof of Theorem \ref{th-a10n9}, we may obtain the exact generating function of $a(50n+19)$, but the calculations and expressions are too lengthy and tedious even if we use \textbf{Mathematica}. Therefore, we decided not to include that lengthy generating function of $a(50n+19)$.
\end{remark}

\section{\textbf{Proofs of \eqref{1,10mod5} and \eqref{3,10mod5}}}\label{sec4}
In this section, we prove the congruences \eqref{1,10mod5} and \eqref{3,10mod5} originally conjectured by Chan \cite{chan-actarith}.

At first, setting $k=5$ and $j=1$ and $3$ in \cite[Eq. (2.7)]{qu}, we have
\begin{align*}4\sum_{n=0}^\infty a_{1,10}(n)q^n&=\dfrac{(-q;q^{10})_\infty^2(-q^9;q^{10})_\infty^2E_{10}^5}
{(q;q^{10})_\infty^2(q^9;q^{10})_\infty^2E_{20}^4}
-2\dfrac{E_{10}}{E_{20}^2}\sum_{n=-\infty}^\infty\dfrac{q^{5n(n+1)}}{1+q^{10n}}\\\intertext{and}
4\sum_{n=0}^\infty a_{3,10}(n)q^n&=\dfrac{(-q^3;q^{10})_\infty^2(-q^7;q^{10})_\infty^2E_{10}^5}
{(q^3;q^{10})_\infty^2(q^7;q^{10})_\infty^2E_{20}^4}
-2\dfrac{E_{10}}{E_{20}^2}\sum_{n=-\infty}^\infty\dfrac{q^{5n(n+1)}}{1+q^{10n}},
\end{align*}
respectively.

\noindent In view of \eqref{jtpi}, we can rewrite the above identities as
\begin{align}\label{gen-a1-10}4\sum_{n=0}^\infty a_{1,10}(n)q^n=\dfrac{f^2(q,q^9)E_{10}^5}
{f^2(-q,-q^9)E_{20}^4}
-2A(q^2)\\\intertext{and}
\label{gen-a3-10}4\sum_{n=0}^\infty a_{3,10}(n)q^n=\dfrac{f^2(q^3,q^7)E_{10}^5}
{f^2(-q^3, -q^{7})E_{20}^4}
-2A(q^2),
\end{align}
where $$A(q):=\dfrac{E_{5}}{E_{10}^2}\sum_{n=-\infty}^\infty\dfrac{q^{5n(n+1)/2}}{1+q^{5n}}.$$

\begin{pf-a1-10}
To prove \eqref{1,10mod5}, first we aim to find a  generating function of $a_{1,10}(2n+1)$. For that purpose, we need to find a 2-dissection of the first term of the right side of \eqref{gen-a1-10}. To that end, we recast \eqref{gen-a1-10} as
\begin{align}\label{gen-a1-10-1}4\sum_{n=0}^\infty a_{1,10}(n)q^n&=\dfrac{f^2(q,q^9)f^2(-q^3,-q^7)E_{10}^5}
{f^2(-q,-q^9)f^2(-q^3,-q^7)E_{20}^4}
-2A(q^2).
\end{align}

By Jacobi triple product identity, \eqref{jtpi}, we have
\begin{align}\label{1937}
f(-q,-q^9)f(-q^3,-q^7)&=\dfrac{(q;q^2)_\infty E_{10}^2}{(q^5;q^{10})_\infty}=\dfrac{E_1E_{10}^3}{E_2E_5},
\end{align}
and hence, \eqref{gen-a1-10-1} reduces to
\begin{align}\label{gen-a1-10-2}4\sum_{n=0}^\infty a_{1,10}(n)q^n&=\dfrac{E_2^2E_5^2}{E_1^2E_{10}E_{20}^4}f^2(q,q^9)f^2(-q^3,-q^7)
-2A(q^2)\notag\\
&=\dfrac{E_2}{E_{20}^4}\cdot \dfrac{\varphi(-q^5)}{\varphi(-q)}\left(f^2(q,q^9)f^2(-q^3,-q^7)\right)
-2A(q^2),
\end{align}
where \eqref{varphi} is used to arrive at the last equality.

Now, setting $a=q$, $b=q^9$, $c=-q^3$, and $d=-q^7$ in \eqref{fabcdi} and \eqref{fabcdii}, and then adding, we find that
\begin{align}\label{fabcdi3}
f(q,q^9)f(-q^3,-q^7)&=f(-q^4,-q^{16})f(-q^8,-q^{12})+qf(-q^6,-q^{14})f(-q^2,-q^{18}).
\end{align}

But, by Jacobi triple product identity, \eqref{jtpi}, we have
\begin{align*}
f(-q,-q^4)f(-q^2,-q^3)&=E_1E_5.
\end{align*}
Using the above identity  and \eqref{1937}  in \eqref{fabcdi3}, we see that
\begin{align*}
f(q,q^9)f(-q^3,-q^7)&=E_4E_{20}+q\dfrac{E_2E_{20}^3}{E_4E_{10}}.
\end{align*}
Therefore, \eqref{gen-a1-10-2} can be rewritten as
\begin{align}\label{gen-a1-10-3}4\sum_{n=0}^\infty a_{1,10}(n)q^n&=\dfrac{E_2}{E_{20}^4}\cdot \dfrac{\varphi(-q^5)}{\varphi(-q)}\left(E_4^2E_{20}^2+q^2\dfrac{E_2^2E_{20}^6}{E_4^2E_{10}^2}+2q\dfrac{E_2E_{20}^4}{E_{10}}\right)
-2A(q^2).
\end{align}
Replacing $q$ by $-q$ in the above, and then subtracting the resulting identity from \eqref{gen-a1-10-3}, we find that
\begin{align*}&4\sum_{n=0}^\infty a_{1,10}(n)q^n-4\sum_{n=0}^\infty a_{1,10}(n)(-q)^n\\
&=\dfrac{E_2}{E_{20}^4}
\left(E_4^2E_{20}^2+q^2\dfrac{E_2^2E_{20}^6}{E_4^2E_{10}^2}\right)\left(\dfrac{\varphi(-q^5)}{\varphi(-q)}-\dfrac{\varphi(q^5)}{\varphi(q)}\right)
+2q\dfrac{E_2^2}{E_{10}}\left(\dfrac{\varphi(-q^5)}{\varphi(-q)}+\dfrac{\varphi(q^5)}{\varphi(q)}\right).
\end{align*}
With the aid of the trivial identity $\varphi(q)\varphi(-q)=\varphi^2(-q^2)={E_2^4}/{E_4^2}$, we can rewrite the above as
\begin{align}\label{gen-a1-10-5}4&\sum_{n=0}^\infty a_{1,10}(n)q^n-4\sum_{n=0}^\infty a_{1,10}(n)(-q)^n\notag\\
&=\dfrac{E_4^2}{E_2^3E_{20}^4}
\left(E_4^2E_{20}^2+q^2\dfrac{E_2^2E_{20}^6}{E_4^2E_{10}^2}\right)\left(\varphi(q)\varphi(-q^5)-\varphi(-q)\varphi(q^5)\right)\notag\\
&\quad+2q\dfrac{E_4^2}{E_2^2E_{10}}\left(\varphi(q)\varphi(-q^5)+\varphi(-q)\varphi(q^5)\right).
\end{align}

Now, recall from \cite[p. 278]{bcb3} that
\begin{align*}\varphi(q)\varphi(-q^5)-\varphi(-q)\varphi(q^5)=4qE_4E_{20}.
\end{align*}

Furthermore, from Entries 25(i) and 25(ii) of \cite[p. 40]{bcb3}, it is easy to show that
\begin{align*}\varphi(q)\varphi(-q^5)+\varphi(-q)\varphi(q^5)=2\varphi(q^4)\varphi(q^{20})-8q^6\psi(q^8)\psi(q^{40}).
\end{align*}
Therefore, \eqref{gen-a1-10-5} becomes
\begin{align*}4&\sum_{n=0}^\infty a_{1,10}(n)q^n-4\sum_{n=0}^\infty a_{1,10}(n)(-q)^n\notag\\
&=4q\dfrac{E_4^3}{E_2^3E_{20}^3}
\left(E_4^2E_{20}^2+q^2\dfrac{E_2^2E_{20}^6}{E_4^2E_{10}^2}\right)+2q\dfrac{E_4^2}{E_2^2E_{10}}\left(2\varphi(q^4)\varphi(q^{20})-8q^6\psi(q^8)\psi(q^{40})\right).
\end{align*}
Extracting the terms involving $q^{2n+1}$ from both sides of the above, dividing by $q$, and then replacing $q^2$ by $q$, we find that
\begin{align}\label{gen-a1-10-55}&2\sum_{n=0}^\infty a_{1,10}(2n+1)q^n\notag\\
&=\dfrac{E_2^3}{E_1^3E_{10}^3}
\left(E_2^2E_{10}^2+q\dfrac{E_1^2E_{10}^6}{E_2^2E_{5}^2}\right)+\dfrac{E_2^2}{E_1^2E_{5}}\left(\varphi(q^2)\varphi(q^{10})-4q^3\psi(q^4)\psi(q^{20})\right)\notag\\
&=\dfrac{E_1^2}{E_5}+6q\dfrac{E_2E_{10}^3}{E_1E_{5}^2}+\dfrac{E_2^2}{E_1^2E_{5}}\left(\varphi(q^2)\varphi(q^{10})-4q^3\psi(q^4)\psi(q^{20})\right),
\end{align}
where we have used \eqref{A-5qB} to arrive at the last equality.

Now, to prove\eqref{1,10mod5}, we see from the above that it is enough to show that the coefficients of $q^{5n+2}$ of the terms  on the right side of the above are multiples of $5$. We accomplish this in the remaining part of the proof.

With the aid of \eqref{E1} and \eqref{1byE1}, we find that
\begin{align*}
&\left[q^{5n+2}\right]\left\{\dfrac{E_1^2}{E_5}+6q\dfrac{E_2E_{10}^3}{E_1E_{5}^2}\right\}\\
&=-\dfrac{E_5^2}{E_1}+6\dfrac{E_2^3E_5^5E_{10}}{E_1^8}
\left(\left(x^3y+\dfrac{q^2}{x^3y}\right)-2q\left(\dfrac{x^2}{y}-\dfrac{y}{x^2}\right)-5q\right),
\end{align*}
where $x$ and $y$ are as defined in Lemma \ref{xy}. Employing \eqref{x2-by-y} and \eqref{x3y} in the above, and then simplifying by using \eqref{A-4qB}, we obtain
\begin{align}\label{gen-a1-10-6}
&\left[q^{5n+2}\right]\left\{\dfrac{E_1^2}{E_5}+6q\dfrac{E_2E_{10}^3}{E_1E_{5}^2}\right\}=5\dfrac{E_5^2}{E_1}+30q\dfrac{E_2E_5E_{10}^3}{E_1^4}.
\end{align}

Next, by \eqref{binomial}, we have
\begin{align*}
\dfrac{E_2^2}{E_1^2E_{5}}~\varphi(q^2)\varphi(q^{10})&=\varphi(q^{10})\dfrac{E_4^5}{E_1^2E_{5}E_8^2}\\
&\equiv\dfrac{E_{20}\varphi(q^{10})}{E_5^2E_{40}}~E_1^3E_{8}^3~(\textup{mod}~5),
\end{align*}
which, with the aid of Jacobi's identity, \eqref{jacobi}, can be written as
\begin{align}\label{gen-a1-10-7}
&\dfrac{E_2^2}{E_1^2E_{5}}~\varphi(q^2)\varphi(q^{10})\notag\\
&\equiv\dfrac{E_{20}\varphi(q^{10})}{E_5^2E_{40}}~\sum_{j=0}^\infty\sum_{k=0}^\infty(-1)^{j+k}(2j+1)(2k+1)q^{j(j+1)/2+4k(k+1)}~(\textup{mod}~5).
\end{align}
We now observe that $j(j+1)/2+4k(k+1)\equiv2~(\textup{mod}~5)$ only when $j\equiv2~(\textup{mod}~5)$ and $k\equiv2~(\textup{mod}~5)$; i.e., only when
both $2j+1$ and $2k+1$ are multiples of 5.
Therefore, from \eqref{gen-a1-10-7}, we find that
\begin{align}\label{gen-a1-10-8}
&\left[q^{5n+2}\right]\left\{\dfrac{E_2^2}{E_1^2E_{5}}~\varphi(q^2)\varphi(q^{10})\right\}\equiv0~(\textup{mod}~5).
\end{align}

Finally, by \eqref{psi} and \eqref{binomial}, we have
\begin{align*}
q^3\dfrac{E_2^2}{E_1^2E_{5}}~\psi(q^4)\psi(q^{20})&=q^3\dfrac{E_2^2E_8^2}{E_1^2E_4E_5}~\psi(q^{20})\\
&\equiv\dfrac{\psi(q^{20})}{E_5^2}~E_1^3\cdot \dfrac{E_2^2E_{8}^2}{E_4}~(\textup{mod}~5),
\end{align*}
which can be rewritten, with the help of \eqref{jacobi} and \eqref{jacobi-type}, as
\begin{align}\label{gen-a1-10-9}
&q^3\dfrac{E_2^2}{E_1^2E_{5}}~\psi(q^4)\psi(q^{20})\notag\\
&\equiv\dfrac{\psi(q^{20})}{E_5^2}~\sum_{j=0}^\infty\sum_{k=-\infty}^\infty(-1)^{j}(2j+1)(3k+1)q^{j(j+1)/2+2k(3k+2)+3}~(\textup{mod}~5).
\end{align}
We observe that $j(j+1)/2+2k(3k+1)+3\equiv2~(\textup{mod}~5)$ only when $j\equiv2~(\textup{mod}~5)$ and $k\equiv3~(\textup{mod}~5)$; i.e., only when
both $2j+1$ and $3k+1$ are multiples of 5. Therefore, from \eqref{gen-a1-10-9}, we arrive at
\begin{align}\label{gen-a1-10-10}
&\left[q^{5n+2}\right]\left\{q^3\dfrac{E_2^2}{E_1^2E_{5}}~\psi(q^4)\psi(q^{20})\right\}\equiv0~(\textup{mod}~5).
\end{align}
With the aid of \eqref{gen-a1-10-6}, \eqref{gen-a1-10-8} and \eqref{gen-a1-10-10}, we conclude from \eqref{gen-a1-10-55} that
\begin{align*}a_{1,10}(10n+5)\equiv0~(\textup{mod}~5).
\end{align*}
Thus, we complete the proof of \eqref{1,10mod5}.
\end{pf-a1-10}
\begin{pf-a3-10}We can recast \eqref{gen-a3-10} as
\begin{align*}4\sum_{n=0}^\infty a_{3,10}(n)q^n=\dfrac{f^2(q^3,q^7)f^2(-q, -q^{9})E_{10}^5}
{f^2(-q^3, -q^{7})f^2(-q, -q^{9})E_{20}^4}
-2A(q^2),
\end{align*}
Proceeding exactly in the same way as in the proof of \eqref{1,10mod5}, we find that
\begin{align*}2\sum_{n=0}^\infty a_{3,10}(2n+1)q^n=\dfrac{E_1^2}{E_5}+6q\dfrac{E_2E_{10}^3}{E_1E_{5}^2}-\dfrac{E_2^2}{E_1^2E_{5}}\left(\varphi(q^2)\varphi(q^{10})-4q^3\psi(q^4)\psi(q^{20})\right).
\end{align*}
We notice that the right side of the above is almost the same as that of \eqref{gen-a1-10-55} except a negative sign. Hence, \eqref{3,10mod5} can be deduced as in the previous case.
\end{pf-a3-10}

\end{document}